\definecolor{orange}{rgb}{1,0.5,0}
\newtheorem{proposition}{Proposition}
\newtheorem{assumption}{Assumption}
\title{\LARGE \bf A robust trajectory tracking controller for four-wheel skid-steering mobile robots}
\author{Jae-Yun Jun, Minh-Duc Hua, Fa\"{\i}z Benamar

\thanks{Jae-Yun Jun, Minh-Duc Hua and Fa\"{\i}z Benamar are with: }
\thanks{1. Sorbonne Universit\'{e}s, UPMC Univ Paris 06, UMR 7222,}
\thanks{$\; \; \,$   ISIR, F-75005, Paris, France}
\thanks{2. CNRS, UMR 7222, ISIR, F-75005, Paris, France}
\thanks{e-mail: jaeyunjk@gmail.com, hua@isir.upmc.fr, amar@isir.upmc.fr}
}
\begin{document}

\maketitle
\thispagestyle{empty}
\pagestyle{empty}

\begin{abstract}
A novel dynamic model-based trajectory tracking control law is proposed for a four-wheel differentially driven mobile robot using a backstepping technique that guarantees the Lyapunov stability.
The present work improves the work of \textit{Caracciolo et al.} \cite{caracciolo.1999.icra}, a dynamic feedback linearization approach, by reducing the number of required assumptions
and the number of state terms. We also thoroughly investigate on a gain tuning procedure which is often overlooked for nonlinear controllers.
Finally, the performance of the proposed controller is compared with the dynamic feedback linearization approach via simulation results which indicate that
our controller is robust even in the presence of measurement noise and control time delay.
\end{abstract}
\section{Introduction}

\label{sec:Introduction}

Skid-steering mobile robots are often used for traversing over uneven terrains because they are mechanically robust
due to the reduced number of degrees of freedom and the non-requirement of active steering mechanisms.
They steer by creating a differential of the forces generated from the actuators located along the two sides of the longitudinal axis of the robot \cite{wong.2008.book}.
This differential of forces generates a non-null lateral velocity causing in turn the effect of side skidding.

The task of following a desired path (or trajectory) by a skid-steering mobile robot involves controlling the amount of the differential of the forces generated from the two sides of the robot
and therefore the amount of skidding.
However, controlling the amount of skidding is not an easy task. When a skid-steering robot follows a curved path,
its heading is not parallel to the tangent of the curved path because it laterally skids.
The robot's instantaneous center of rotation (ICR) is not fixed as in the case of active steering mobile robots with ideal rolling,
but it may continuously change, 
and, in extreme cases, the ICR may be located beyond the dimension of the robot along the longitudinal axis causing the robot's motion instability.

In the past, there have been several works in estimating the location of the ICR while a four-wheel skid-steering mobile robot \cite{mandow.2007.iros}
or a tracked mobile robot \cite{martinez.2005.ijrr} make turns with the purpose to improve in controlling these types of robots.
However, estimating the location of the ICR is not straightforward because it depends on the robot's instantaneous lateral velocity and its instantaneous angular velocity.

Aware of this difficulty, \textit{Caracciolo et al.} \cite{caracciolo.1999.icra} proposed
a model-based nonlinear controller in the dynamic feedback linearization paradigm that accounts
for the fact that the ICR does not lie along the lateral axis of the robot's center of mass but at a certain fixed distance
from the robot's center of mass along its longitudinal axis.
This notion is translated into an operational nonholonomic constraint and is added to the equations of motion
in order to ``virtually'' impose that the robot's lateral velocity must be proportional to its angular velocity \cite{caracciolo.1999.icra}.
However, their controller requires that the longitudinal velocity does not vanish at any time instant in order to have a finite-valued control input signal.
This implies that the robot should not have a non-null initial velocity, like the simulation example reported by the authors in \cite{caracciolo.1999.icra}.
Besides, their control input signal requires the measurement of the acceleration term in addition to the position and the velocity terms.

Recently, \textit{Koz\l{}owski and Paderski} proposed a controller to obtain practical stabilization in trajectory tracking.
This controller can stabilize the trajectory tracking up to certain bounds of the position and orientation errors \cite{kozlowski.2004.ijamcs}.
However, high gains are required in order to obtain sufficiently small error tracking,
bearing in mind that high gains may excessively amplify the destabilizing effects of measurement noise, control discretization and/or time delay.

In the present work, on top of the dynamic modeling that \textit{Caracciolo et al.} developed in \cite{caracciolo.1999.icra},
we propose a nonlinear control design that preserves the dynamics of the system which does not require nonzero-velocity constraint.
In addition, the acceleration term is not necessary, but only the position and the velocity terms suffice to control the robot.
For simulation illustration purposes, we emulate the sensory noise by adding a multi-variate white Gaussian noise to the state vector
and show that the proposed controller can robustify the system and track tightly a trajectory with the curvature changing continuously (an eight-shaped Lissajous curve).
Then, we further introduce to the noisy system a control time delay and a zero-order-hold to hold the control input signal during a certain period of time.
Reported simulation results show that the proposed controller is able to robustly track an eight-shaped Lissajous curve whereas the controller
based on the dynamic feedback linearization fails to track the reference trajectory.

The remainder of the paper is organized as follows. In Section II, the dynamic modeling of a four-wheel skid-steering mobile robot is recalled and discussed.
In Section III, we present the design of a novel controller and investigate on a gain-tuning procedure.
In Section IV, simulation results are reported and discussed.
Finally, conclusion remarks and perspectives are given in Section V.

\section{Recall on the dynamic modeling of a four-wheel skid-steering mobile robot with an operational nonholonomic constraint}
\label{sec: dynamical model}

\begin{figure} [!t]
 \centering
 \includegraphics[height=.35\textwidth, angle=0]{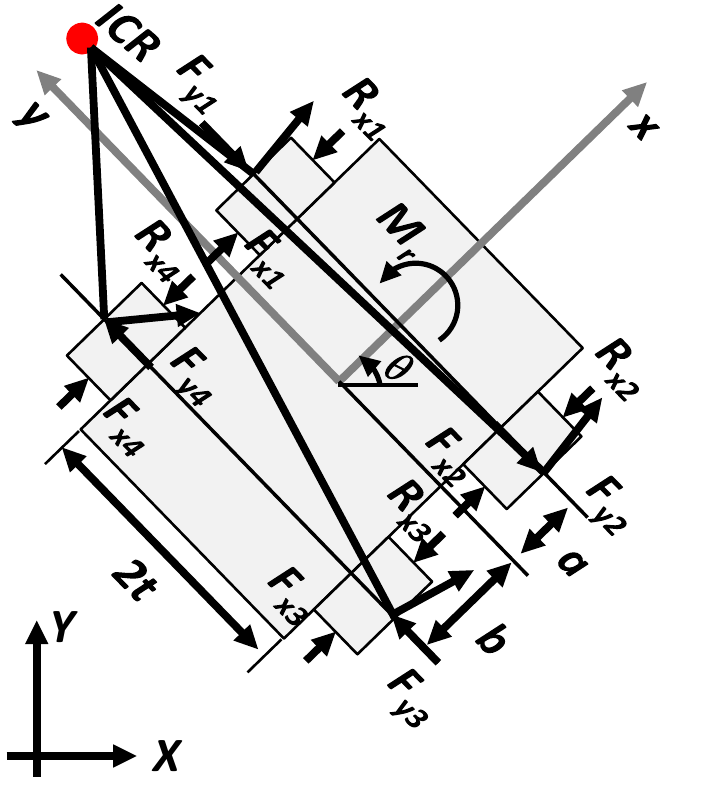} \vspace{-.05 in}
 \caption{A four-wheel skid-steering mobile robot.} \vspace{-.2 in}
 \label{fig:kinematic model}
\end{figure}

As shown in \cite{caracciolo.1999.icra}, the equations of motion of a four-wheel skid-steering mobile robot are given by
\begin{equation} \label{eq:equations of motion}
 \mathbf{M \ddot{q}} + \mathbf{c(q,\dot{q})} = \mathbf{E(q) \boldsymbol{\tau}},
\end{equation}
where $\mathbf{q} \triangleq \left[ X, Y, \theta \right]^T$  with $(X,Y)$ the coordinates of the robot's center of mass expressed in the inertial frame and $\theta$ its yaw angle. The terms $\mathbf{M}$, $\mathbf{c}$ and $\mathbf{E}$ and
the control vector $\boldsymbol{\tau}$ are defined as
\begin{equation*}\label{eq:definition of EoM's components}
\begin{split}
\mathbf{M} \triangleq
\begin{bmatrix}
 m & 0 & 0 \\
 0 & m & 0 \\
 0 & 0 & I
\end{bmatrix}, \quad
\mathbf{c(q,\dot{q})} \triangleq
\begin{bmatrix}
 R_x \cos{\theta} - F_y \sin{\theta} \\
 R_x \sin{\theta} + F_y \cos{\theta} \\
 M_r
\end{bmatrix}, \\
\mathbf{E(q)} \triangleq
\begin{bmatrix}
 \cos{\theta}/r & \cos{\theta}/r \\
 \sin{\theta}/r & \sin{\theta}/r \\
 t/r & -t/r
\end{bmatrix}, \quad
\tau_i = 2 r F_{x_i}, \quad i=1,2
\end{split}
\end{equation*}
where $m$, $I$ and $r$ denote the robot's mass, its inertia about the body $z$-axis and the wheel radius, respectively.
$a$, $b$ and $t$ are the robot's dimensional parameters (as defined in Fig. \ref{fig:kinematic model}).
$F_{x_i}$ is the $i$-th wheel's tractive force.
$R_x$, $F_y$ and $M_r$ are the resistive longitudinal and lateral forces and the resistive moment, respectively, which can be computed as follows
\begin{equation} \label{eq:total longitudinal resistive force}
\nonumber
 \begin{aligned}
    R_x =& \displaystyle\sum\limits_{i=1}^4 R_{xi} = f_r \frac{mg}{2} \left( \text{sgn}(\dot{x}_1) + \text{sgn}(\dot{x}_2) \right), \\
    F_y =& \displaystyle\sum\limits_{i=1}^4 F_{yi} = \mu \frac{mg}{a+b} \left( b \: \text{sgn}(\dot{y}_1) + a \: \text{sgn}(\dot{y}_3) \right), \\
    M_r =&  \quad a ( F_{y1} + F_{y2} ) - b ( F_{y3} + F_{y4} ) \\
            & + t \left[ (R_{x2}+R_{x3}) - (R_{x1} + R_{x4}) \right] \\
        =&   \quad \mu \frac{a \, b \, m \, g}{a+b} \left( \text{sgn}(\dot{y}_1) - \text{sgn}(\dot{y}_3) \right) \\
            & + f_r \frac{t \, m \, g}{2} \left( \text{sgn}(\dot{x}_2) - \text{sgn}(\dot{x}_1) \right),
 \end{aligned}
\end{equation}
with $g$, $f_r$, $\mu$, and $\text{sgn}(\cdot)$ the gravitational acceleration, the coefficient of rolling friction, the coefficient of lateral friction and the sign function, respectively. Besides, $\dot{x}_i$ and $\dot{y}_i$, with $i=1, \cdots,4$, are respectively the longitudinal and the lateral wheel velocities, subject to the following relationships with the linear and angular velocities $(\dot{x}, \dot y, \dot{\theta})$ expressed in the body frame
\begin{equation*} \label{eq:relation between wheel velocities and CoM velocities}
 \left\{
 \begin{split}
  \dot{x}_1 & = \dot{x}_4 = \dot{x} - t \dot{\theta}, \\ 
  \dot{x}_2 & = \dot{x}_3 = \dot{x} + t \dot{\theta}, \\
  \dot{y}_1 & = \dot{y}_2 = \dot{y} + a \dot{\theta}, \\
  \dot{y}_3 & = \dot{y}_4 = \dot{y} - b \dot{\theta}.
 \end{split}
 \right.
\end{equation*}

The velocity in the body frame is related to the velocity in the inertial frame as follows
\begin{equation}
 \begin{bmatrix}
  \dot{X} \\ \dot{Y}
 \end{bmatrix}
 =
 \mathbf{R}
 \begin{bmatrix}
  \dot{x} \\ \dot{y}
 \end{bmatrix},
\end{equation}
with $\mathbf{R}  \triangleq
 \begin{bmatrix}
  \cos{\theta} & -\sin{\theta} \\
  \sin{\theta} & \cos{\theta}
 \end{bmatrix}$  the  rotation matrix.

The location $(x_{\text{ICR}}, y_{\text{ICR}})$, expressed in the body frame, of the instantaneous center of rotation (ICR) should remain inside the robot's dimension
along the longitudinal direction (i.e., $-b \leq x_{\text{ICR}}  \leq a$) at any time instant in order to ensure the robot's motion stability.
If the ICR goes outside the robot's dimension along the longitudinal direction, then all the resistive lateral forces $F_{yi}$, with $i=1,\cdots, 4$,
will have the same sign, and, consequently, there is no way to balance the amount of skidding with the wheel actuators, causing the loss of controllability of the mobile robot \cite{shiller.1993.icra}.
If the location of the ICR is known, then a controller may be designed to track a reference trajectory while ensuring the constraint $-b \leq x_{\text{ICR}}  \leq a$ so as to avoid instability.
However, it is not easy to design such a controller due to the fact that the longitudinal coordinate of ICR, $x_{\text{ICR}} = - {\dot{y}}/{\dot{\theta}}$ (see \cite{caracciolo.1999.icra}),
is a function of the vehicle's state.
A practical solution has been proposed by \textit{Caracciolo et al.} \cite{caracciolo.1999.icra} by imposing a ``virtual'' constraint $x_{\text{ICR}} = d_0$,
with $0 < d_0 < a$. This yields the following nonholonomic constraint
\begin{equation} \label{eq:operational nonholonomic constraint}
 \dot{y} + d_0 \dot{\theta}  = 0,
\end{equation}
which implies that the lateral speed and the angular velocity should have a fixed relationship by the constant distance $d_0$.
This ``unnatural'' constraint is not always satisfied in reality, and a controller should be designed to closely maintain this relationship.
Inspired by \cite{caracciolo.1999.icra}, the control design in the next section is based on the following augmented model (instead of \eqref{eq:equations of motion})
\begin{equation} \label{eq:equations of constrained motion}
 \mathbf{M \ddot{q}} + \mathbf{c(q,\dot{q})} = \mathbf{E(q)}\boldsymbol{\tau} + \mathbf{A(q)}^\mathrm{T} \boldsymbol{\lambda},
\end{equation}
where $\boldsymbol{\lambda}$ is a vector of Lagrangian multipliers representing the constrained forces, while the matrix $\mathbf{A}$ holds the following relationship
\begin{equation*} \label{eq:operational nonholonomic constraint 1}
 \begin{bmatrix}
  -\sin{\theta} & \cos{\theta} & d_0
 \end{bmatrix}
 \begin{bmatrix}
  \dot{X} \\ \dot{Y} \\ \dot{\theta}
 \end{bmatrix}
 = \mathbf{A(q) \dot{q}} = 0.
\end{equation*}
The admissible generalized velocities $\dot{\mathbf{q}}$ can be defined as
\begin{equation} \label{eq:admissible generalized velocities}
 \mathbf{\dot{q}} = \mathbf{N(q)} \boldsymbol{\eta},
\end{equation}
where $\boldsymbol{\eta}  \in \mathbb{R}^2$ is a pseudo-velocity, and the columns of the matrix $\mathbf{N}$ are in the null space of $\mathbf{A}$, e.g.,
\begin{equation*} \label{eq:null space of A}
 \mathbf{N(q)} =
 \begin{bmatrix}
  \cos{\theta} & -\sin{\theta} \\
  \sin{\theta} & \cos{\theta} \\
  0 & -\frac{1}{d_0}
 \end{bmatrix}.
\end{equation*}
By differentiating \eqref{eq:admissible generalized velocities} and eliminating $\boldsymbol{\lambda}$ from \eqref{eq:equations of constrained motion} one obtains
\begin{equation} \label{eq:first simplified EoMs}
\left\{
 \begin{aligned}
  \mathbf{\dot{q}} & = \mathbf{N} \boldsymbol{\eta}, \\
  \mathbf{N}^\mathrm{T}\mathbf{M N} \mathbf{\dot{\boldsymbol{\eta}}} & =  \mathbf{N}^\mathrm{T} ( \mathbf{E} \boldsymbol{\tau} - \mathbf{M \dot{N}} \boldsymbol{\eta} - \mathbf{c}).
 \end{aligned}
\right.
\end{equation}
One verifies that the matrices $\mathbf{N}^\mathrm{T}\mathbf{M N}$ and $ \mathbf{N}^\mathrm{T}\mathbf{E}$ are invertible. Thus, by making simple change of control variables
\begin{equation} \label{eq:static state feedback law}
\boldsymbol{\tau} = \left(\mathbf{N}^\mathrm{T}\mathbf{E}\right)^{-1} \left( \mathbf{N}^\mathrm{T}\mathbf{M N} \mathbf{u} +\mathbf{N}^\mathrm{T} \mathbf{M \dot{N} \boldsymbol{\eta}} + \mathbf{N}^\mathrm{T}\mathbf{c} \right),
\end{equation}
with $\mathbf{u} = \begin{bmatrix} u_1 & u_2 \end{bmatrix}^\mathrm{T}$ the vector of new control variables, then system \eqref{eq:first simplified EoMs} can be rewritten as
\begin{equation} \label{eq:simplified equations of motion}
\left\{
 \begin{aligned}
  \mathbf{\dot{q}} & = \mathbf{N} \boldsymbol{\eta}, \\
  \mathbf{\dot{\boldsymbol{\eta}}} & = \mathbf{u},
 \end{aligned}
\right.
\end{equation}
which is equivalent to
\begin{equation} \label{eq:explicit form of EoMs}
 \left\{
 \begin{split}
  \dot{X} & = \cos{\theta} \eta_1 - \sin{\theta} \eta_2, \\
  \dot{Y} & = \sin{\theta} \eta_1 + \cos{\theta} \eta_2, \\
  \dot{\theta} & = -\frac{1}{d_0} \eta_2, \\
  \dot{\eta}_1 & = u_1, \\
  \dot{\eta}_2 & = u_2.
 \end{split}
 \right.
\end{equation}
Th control design proposed in the next section is based on (\ref{eq:explicit form of EoMs}).

\section{Lyapunov-based control design}
\label{sec:lyapunov-based controller}

\subsection{Control design}
\label{subsec:controller design}

Similar to \cite{caracciolo.1999.icra}, a control point is chosen on the longitudinal body axis at the distance $d_0$ from the origin of the body frame. The vector of coordinates expressed in the inertial frame of this control point is thus given by
\begin{equation*} \label{eq:control point coordinates}
 \boldsymbol{\xi}  =
 \begin{bmatrix} X + d_0 \cos{\theta} \\ Y + d_0 \sin{\theta} \end{bmatrix}.
\end{equation*}
From \eqref{eq:explicit form of EoMs}, one verifies that the time-derivative of $\boldsymbol{\xi}$ satisfies
\[
\dot{\boldsymbol{\xi}} = \eta_1 \mathbf{R} \mathbf{e}_1, \quad \text{with} \quad \mathbf{e}_1 \triangleq \begin{bmatrix} 1 & 0 \end{bmatrix}^\mathrm{T}.
\]
Let $\boldsymbol{\xi}_r \in \mathbb{R}^2$ denote the reference position expressed in the inertial frame for the control point defined up to third-order derivative. Define $\tilde {\boldsymbol{\xi}} \triangleq \boldsymbol{\xi} - \boldsymbol{\xi}_r$ and $\boldsymbol{\bar{\xi}} \triangleq \mathbf{R}^\mathrm{T} \boldsymbol{\tilde{\xi}}$ as the position errors expressed in the inertial frame and body frame, respectively.

It is straightforward to deduce following equations of the error dynamics
\begin{equation} \label{eq:new system of equations}
 \left\{
 \begin{aligned}
  \boldsymbol{\dot{\bar{{\xi}}}} & =  -\omega \mathbf{S} \boldsymbol{\bar{\xi}} + \eta_1 \mathbf{e_1} - \mathbf{R}^\mathrm{T} \boldsymbol{\dot{\xi}}_r, \\
  \dot{\mathbf{R}} & = \omega \mathbf{R} \mathbf{S}, \\
  \dot{\eta}_1 & = u_1, \\
  \dot{\omega} & = \bar{u}_2,
 \end{aligned}
 \right.
\end{equation}
with  $\bar{u}_2 \triangleq -\frac{1}{d_0} u_2$ the new control variable, $\omega \triangleq \dot{\theta}$ and $\mathbf{S} \triangleq \begin{bmatrix} 0 & -1 \\ 1 & 0 \end{bmatrix}$.
Then, the control objective can be stated as the asymptotical stabilization of $\bar {\boldsymbol{\xi}}$,
or equivalently of $\tilde {\boldsymbol{\xi}}$, about zero using $(u_1, \bar u_2)$ as control inputs.

The first equation of \eqref{eq:new system of equations} indicates that the relation $\boldsymbol{\bar{\xi}} \equiv \mathbf{0}$ implies that
\begin{equation}\label{eq:equilibrium}
\eta_1 \mathbf{e_1} - \mathbf{R}^\mathrm{T} \boldsymbol{\dot{\xi}}_r \equiv \mathbf{0}.
\end{equation}
As long as $|\boldsymbol{\dot{\xi}}_r|$ is different from zero, one can define a locally unique solution of $\mathbf{R}$ (or $\theta$) to equation \eqref{eq:equilibrium}.
However, this solution cannot be prolonged by continuity at $\boldsymbol{\dot{\xi}}_r=\mathbf{0}$.
This singularity corresponds to the case when the linearization of system
\eqref{eq:new system of equations} at any equilibrium $(\boldsymbol{\bar{\xi}}, \mathbf{R}, \omega) = (\mathbf{0}, \mathbf{R}^*, 0)$ is not controllable.
Moreover, one can verify from the application of the Brockett's theorem \cite{brockett.1983.dgct} for this case that there does not exist any time-invariant $\mathcal{C}^1$ feedback control law
that asymptotically stabilizes the system at the equilibrium $(\boldsymbol{\bar{\xi}}, \mathbf{R}, \omega) = (\mathbf{0}, \mathbf{R}^*, 0)$.
We thus discard this difficult issue in the present paper by making the following reasonable assumption.
\begin{assumption}\label{assump1}
There exists a positive constants $\delta_r$ and $a_r$ such that $|\boldsymbol{\dot{\xi}}_r(t)| \geq \delta_r$ and $|\boldsymbol{\ddot{\xi}}_r(t)|\leq a_r$, $\forall t$.
\end{assumption}

The following result is obtained based on a Lyapunov function constructed using a backstepping proceduce.

\begin{proposition} \label{propos:asymptotic convergence}
Consider the error system \eqref{eq:new system of equations}.
Assume that Assumption \ref{assump1} holds. Let $\eta_{1d}$ and $\omega_d$ denote auxiliary control variables derived from the backstepping procedure and defined as
\begin{equation} \label{eq: designed velocities}
 \left\{
 \begin{aligned}
  \eta_{1d} & \triangleq \mathbf{e}_1^\mathrm{T} \mathbf{R}^\mathrm{T} \boldsymbol{\dot{\xi}}_r - k_1 \bar{\xi}_1, \\
  \omega_d  & \triangleq \omega_r - k_2 |\boldsymbol{\dot{\xi}}_r| \bar{\xi}_2 + k_3 |\boldsymbol{\dot{\xi}}_r| \left( \mathbf{e}_2^\mathrm{T} \mathbf{R}^\mathrm{T} \mathbf{\dot{\xi}_r} \right),
 \end{aligned}
 \right.
\end{equation}
where $\omega_r \triangleq - \frac{\mathbf{\dot{\xi}_r^T} \mathbf{S} \mathbf{\ddot{\xi}_r}}{|\mathbf{\dot{\xi}_r}|^2}$, $k_{1,2}$ are positive constant gains,
and $k_3$ is a positive gain (not necessarily constant) satisfying $\inf\limits_{t} k_3(t) > 0$. Apply the following control law
\begin{equation} \label{eq:control input signals}
 \left\{
 \begin{split}
  u_1 & = \dot{\eta}_{1d} - k_4 \bar{\xi}_1 - k_6 (\eta_1 - \eta_{1d}), \\
  \bar{u}_2 & = \dot{\omega}_d + \frac{k_5}{k_2} \frac{\mathbf{e}_2^\mathrm{T} \mathbf{R}^\mathrm{T} \mathbf{\dot{\xi}_r}}{|\mathbf{\dot{\xi}_r}|},
                 + \left( \frac{\dot{k}_5}{2 k_5} - k_7 \right) (\omega - \omega_d),
 \end{split}
 \right.
\end{equation}
where $k_{4,6}$ are positive constant gains and $k_{5,7}$ are positive gains (not necessarily constant) satisfying $\inf\limits_{t} k_{5,7}(t) > 0$. Then, the following properties hold:
\begin{enumerate}
\item There exist only two equilibria $(\boldsymbol{\bar{\xi}}, \mathbf{R}, \omega) = (\mathbf{0}, \mathbf{R}_{\pm}^*, 0)$, with $\mathbf{R}_{+}^{*\mathrm{T}} \mathbf{e}_1 =  \frac{\boldsymbol{\dot{\xi}}_r}{|\boldsymbol{\dot{\xi}}_r|}$ and $\mathbf{R}_{-}^{*\mathrm{T}} \mathbf{e}_1 =  -\frac{\boldsymbol{\dot{\xi}}_r}{|\boldsymbol{\dot{\xi}}_r|}$.
\item The equilibrium $(\boldsymbol{\bar{\xi}}, \mathbf{R}, \omega) = (\mathbf{0}, \mathbf{R}_{+}^*, \omega_r)$ is almost-globally asymptotically stable.
\end{enumerate}
\end{proposition}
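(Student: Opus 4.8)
\emph{Proof strategy.} The plan is to run a backstepping design that makes the stated control law \eqref{eq: designed velocities}--\eqref{eq:control input signals} emerge naturally, to verify a Lyapunov inequality, and then to upgrade the resulting convergence to almost-global asymptotic stability. Writing the first line of \eqref{eq:new system of equations} componentwise and abbreviating $\alpha\triangleq\mathbf e_1^\mathrm T\mathbf R^\mathrm T\boldsymbol{\dot\xi}_r$, $\beta\triangleq\mathbf e_2^\mathrm T\mathbf R^\mathrm T\boldsymbol{\dot\xi}_r$, $\rho\triangleq|\boldsymbol{\dot\xi}_r|$ (so $\alpha^2+\beta^2=\rho^2$) and $\tilde\omega\triangleq\omega-\omega_r$, one gets $\dot{\bar\xi}_1=\omega\bar\xi_2+\eta_1-\alpha$ and $\dot{\bar\xi}_2=-\omega\bar\xi_1-\beta$. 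Using $\dot{\mathbf R}=\omega\mathbf R\mathbf S$, the identities $\mathbf S\mathbf e_1=\mathbf e_2$, $\mathbf S\mathbf e_2=-\mathbf e_1$, $\mathbf S\mathbf R=\mathbf R\mathbf S$, and the fact that the definition of $\omega_r$ makes $\boldsymbol{\ddot\xi}_r$ minus its component along $\boldsymbol{\dot\xi}_r$ equal to $\omega_r\mathbf S\boldsymbol{\dot\xi}_r$, I would first establish the two key relations
\[
\dot\alpha-\tfrac{\dot\rho}{\rho}\,\alpha=\tilde\omega\,\beta,\qquad \dot\beta-\tfrac{\dot\rho}{\rho}\,\beta=-\tilde\omega\,\alpha ,
\]
which say that $(\alpha,\beta)$ rotates at rate $\tilde\omega$ after time is rescaled by $\rho$. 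The first property then follows immediately: for a solution with $\bar\xi\equiv\mathbf 0$ one needs $\dot{\bar\xi}_2\equiv0$, hence $\beta\equiv0$, hence $\mathbf R\mathbf e_1\parallel\boldsymbol{\dot\xi}_r$, i.e. $\mathbf R\in\{\mathbf R_+^*,\mathbf R_-^*\}$, with then $\eta_1=\eta_{1d}=\alpha=\pm\rho$ and $\omega=\omega_d=\omega_r$; conversely these two solutions are checked to be invariant.

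Next, set $z_1\triangleq\eta_1-\eta_{1d}$ and $z_2\triangleq\omega-\omega_d$; substituting \eqref{eq:control input signals} gives the clean error dynamics $\dot z_1=-k_4\bar\xi_1-k_6 z_1$ and $\dot z_2=\tfrac{k_5}{k_2}\tfrac{\beta}{\rho}+\bigl(\tfrac{\dot k_5}{2k_5}-k_7\bigr)z_2$. The Lyapunov candidate is
\[
V=\tfrac12\bar\xi_1^2+\tfrac12\bar\xi_2^2+\tfrac{1}{k_2}\bigl(1-\tfrac{\alpha}{\rho}\bigr)+\tfrac{1}{2k_4}z_1^2+\tfrac{1}{2k_5}z_2^2 ,
\]
where $1-\alpha/\rho\in[0,2]$ is the heading-error potential whose derivative, by the identities above, is $-\tilde\omega\beta/\rho$. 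Differentiating $V$ along \eqref{eq:new system of equations}, the cross terms $\bar\xi_1 z_1$, $\bar\xi_2\beta$ and $\beta z_2/\rho$ cancel pairwise precisely because of the weights $1/k_2,1/k_4,1/k_5$ and the $k_3$-, $k_5/k_2$- and $\dot k_5/(2k_5)$-terms built into \eqref{eq: designed velocities}--\eqref{eq:control input signals}, leaving
\[
\dot V=-k_1\bar\xi_1^2-\tfrac{k_3}{k_2}\beta^2-\tfrac{k_6}{k_4}z_1^2-\tfrac{k_7}{k_5}z_2^2\le0 .
\]
Guessing the right $V$—in particular recognising $1-\mathbf e_1^\mathrm T\mathbf R^\mathrm T\boldsymbol{\dot\xi}_r/|\boldsymbol{\dot\xi}_r|$ as the correct potential and fixing the four weights—is the step that needs the most insight; once it is chosen the verification is bookkeeping that also explains \emph{why} the control law has its particular form.

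From $\dot V\le0$ the closed-loop state is bounded; invoking Assumption~\ref{assump1} together with $\inf_{t}k_{3,5,7}>0$ and the boundedness/$\mathcal C^1$-regularity of the gains and of the reference derivatives to bound all signals and their time-derivatives, $\dot V$ is uniformly continuous, so Barbalat's lemma yields $\bar\xi_1,\beta,z_1,z_2\to0$. To reach $\bar\xi_2\to0$ I would argue: $z_2,\beta\to0$ and \eqref{eq: designed velocities} give $\tilde\omega+k_2\rho\bar\xi_2\to0$; the second identity above with $\beta\to0$ (and uniform continuity of $\dot\beta$) gives $\tilde\omega\alpha\to0$; and since $\alpha^2=\rho^2-\beta^2$ is eventually bounded below by $\delta_r^2/2>0$, this forces $\tilde\omega\to0$, hence $\bar\xi_2\to0$ and $\mathbf R\mathbf e_1\to\pm\boldsymbol{\dot\xi}_r/\rho$. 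Near $(\mathbf 0,\mathbf R_+^*,\omega_r)$ the term $1-\alpha/\rho$ is positive definite in the heading error, so $V$ is a bona fide Lyapunov function there; combined with the convergence just obtained this gives (uniform) asymptotic stability of that equilibrium.

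Finally, to get the \emph{almost}-global statement I must rule out, for all but a negligible set of initial conditions, convergence to the companion equilibrium $(\mathbf 0,\mathbf R_-^*,\omega_r)$. I would show this equilibrium is unstable—either by linearising the $\beta$-channel, where the computation of the second paragraph produces an unstable mode $\dot\beta\simeq k_3\rho^2\beta+\cdots$, or by Chetaev's theorem using that $1-\alpha/\rho$ attains its strict maximum $2$ at $\mathbf R_-^*$. Its region of attraction is then a lower-dimensional invariant set of measure zero, so every trajectory off this set converges to $(\mathbf 0,\mathbf R_+^*,\omega_r)$, which is the second property of the proposition. The main obstacles are thus (i) discovering the cross-term-cancelling $V$ in the second step, (ii) extracting $\bar\xi_2\to0$ from the merely negative-semidefinite $\dot V$ via Barbalat, and (iii) the instability/measure-zero argument that turns local asymptotic stability into almost-global asymptotic stability.
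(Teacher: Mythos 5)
Your proof follows essentially the same route as the paper's: the identical Lyapunov function $\mathcal L=\frac12|\bar{\boldsymbol\xi}|^2+\frac{1}{k_2}\bigl(1-\mathbf e_1^\mathrm T\mathbf R^\mathrm T\boldsymbol{\dot\xi}_r/|\boldsymbol{\dot\xi}_r|\bigr)+\frac{1}{2k_4}(\eta_1-\eta_{1d})^2+\frac{1}{2k_5}(\omega-\omega_d)^2$ with the same negative semi-definite derivative, Barbalat's lemma, and the same final step extracting $\bar\xi_2\to0$ from $\frac{d}{dt}\bigl(\mathbf e_2^\mathrm T\mathbf R^\mathrm T\boldsymbol{\dot\xi}_r/|\boldsymbol{\dot\xi}_r|\bigr)\to0$ together with $|\mathbf e_1^\mathrm T\mathbf R^\mathrm T\boldsymbol{\dot\xi}_r|$ being eventually bounded away from zero (your explicit rotation identities are exactly the content of the cited Lemma 5). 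The one genuine difference is your closing instability/measure-zero argument for the companion equilibrium $\mathbf R_-^*$, which the paper's proof omits even though it is needed to justify the qualifier ``almost-globally,'' so your version is, if anything, more complete on that point.
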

\begin{proof}
The first property of Proposition \ref{propos:asymptotic convergence} can be straightforwardly deduced from \eqref{eq:new system of equations} and \eqref{eq:equilibrium}.
We prove now the second property.
Consider the following storage function
\begin{equation} \label{eq:storage function}
{\cal S} \triangleq \frac{1}{2} |\boldsymbol{\bar{\xi}}|^2 + \frac{1}{k_2}\left(1 - \frac{\mathbf{e}_1^\mathrm{T} \mathbf{R}^\mathrm{T}\boldsymbol{\dot{\xi}}_r}{|\boldsymbol{\dot{\xi}}_r|}\right),\quad (k_2>0)
\end{equation}
whose time-derivative along the system's solutions satisfies (using Lemma 5 in \cite{hua.2009.phdthesis})
\[
\begin{split}
\dot{\cal S} &= \boldsymbol{\bar{\xi}}^\mathrm{T}\left(\eta_1 \mathbf{e_1} - \mathbf{R}^\mathrm{T} \boldsymbol{\dot{\xi}}_r \right) \\
& \quad+ \frac{1}{k_2} \left( \frac{\omega \mathbf{e}_1^\mathrm{T} \mathbf{S} \mathbf{R}^\mathrm{T}\boldsymbol{\dot{\xi}}_r}{|\boldsymbol{\dot{\xi}}_r|}
-\mathbf{e}_1^\mathrm{T}\mathbf{R}^\mathrm{T} \frac{d}{dt} \left(\frac{\boldsymbol{\dot{\xi}}_r}{|\boldsymbol{\dot{\xi}}_r|} \right)  \right)\\
&= \bar{\xi}_1 \left( \eta_1 - \mathbf{e}_1^\mathrm{T} \mathbf{R}^\mathrm{T}\boldsymbol{\dot{\xi}}_r \right)
- \frac{\mathbf{e}_2^\mathrm{T} \mathbf{R}^\mathrm{T}\boldsymbol{\dot{\xi}}_r}{k_2 |\mathbf{\dot{\xi}_r}|} \left( \omega - \omega_r + k_2 |\mathbf{\dot{\xi}_r}| \bar{\xi}_2 \right),
\end{split}
\]
with $\omega_r$ defined in Proposition \ref{propos:asymptotic convergence}. Then, using the expressions \eqref{eq: designed velocities} of the auxiliary control variables $\eta_{1d}$ and $\omega_d$ one deduces
\begin{equation*} \label{eq:simplified derivative combined container}
\begin{split}
\dot{\cal S} \!=\! -\! k_1 \bar{\xi}_1^{\,2} \!-\! \frac{k_3}{k_2} (\mathbf{e}_2^\mathrm{T} \mathbf{R}^\mathrm{T} \mathbf{\dot{\xi}_r})^2 \!\!+\!  \bar{\xi}_1(\eta_1 \!\!-\! \eta_{1d}) \!-\! \frac{\mathbf{e}_2^\mathrm{T} \mathbf{R}^\mathrm{T} \boldsymbol{\dot{\xi}_r}}{|\mathbf{\dot{\xi}_r}|} (\omega \!-\! \omega_d).
\end{split}
\end{equation*}
Now, backstepping procedure can be applied to deduce the real control inputs $(u_1,\bar u_2)$. Consider the following Lyapunov candidate function
\begin{equation} \label{eq:lyapunov function}
{\cal L} \triangleq {\cal S} + \frac{1}{2k_4} \left( \eta_1 - \eta_{1d} \right)^2 + \frac{1}{2k_5} \left( \omega - \omega_d \right)^2,
\end{equation}
with $\cal S$ defined by \eqref{eq:storage function}. From the system \eqref{eq:new system of equations} and the control expressions \eqref{eq:control input signals}, one deduces
\begin{equation} \label{eq:derivative lyapunov equation}
\begin{split}
\!\!\dot{{\cal L}} &= \dot{{\cal S}} + \frac{1}{k_4} \left( \eta_1 - \eta_{1d} \right) \left( u_1 - \dot{\eta}_{1d} \right) \\
&\qquad + \frac{1}{k_5} (\omega - \omega_d) (\bar{u}_2 - \dot{\omega}_d) - \frac{\dot{k}_5}{2 k_5^2}(\omega-\omega_d)^2 \\
&\!\!\!\!\!\!\!\!= - k_1 \bar{\xi}_1^{\,2} \!-\! \frac{k_3}{k_2} (\mathbf{e}_2^\mathrm{T} \mathbf{R}^\mathrm{T} \boldsymbol{\dot{\xi}}_r)^2 \!-\! \frac{k_6}{k_4} \left( \eta_1 \!-\! \eta_{1d} \right)^2 \!-\! \frac{k_7}{k_5} (\omega \!-\! \omega_d)^2 \!\!\!\!.
\end{split}
\end{equation}
Since $\dot{\cal L}$ is negative semi-definite, the terms $\boldsymbol{\bar{\xi}}$, $\eta_1 - \eta_{1d}$ and $\omega - \omega_d$ remain bounded. From the boundedness of the reference acceleration $\boldsymbol{\ddot{\xi}_r}$ (Assumption \ref{assump1}),
one can show that $\ddot{\cal L}$ is bounded which implies that $\dot{\cal L}$ is uniformly continuous along every system's solution. Then, by the application of the Barbalat's lemma \cite{khalil.1996.book},
one can ensure that $\dot{\cal L}$ converges to zero. Consequently, one can deduce that
\begin{equation}\label{eq:aysmptotic convergence}
\left( \bar{\xi}_1, \mathbf{e}_2^\mathrm{T} \mathbf{R}^\mathrm{T} \boldsymbol{\dot{\xi}}_r, \eta_1 - \eta_{1d}, \omega - \omega_d \right) \rightarrow \mathbf{0}.
\end{equation}

In addition, one needs to make sure that $\bar{\xi}_2$ asymptotically converges to zero.
If $u_1$ and $\bar{u}_2$ are defined as (\ref{eq:control input signals}), then $\omega$ converges to $\omega_{d}$ as indicated in (\ref{eq:aysmptotic convergence}).
Using this fact and the Lemma 5 of \cite{hua.2009.phdthesis}, one gets
\begin{equation} \label{eq:proof 1}
 \frac{d}{dt} \left( \frac{\mathbf{e}_2^\mathrm{T} \mathbf{R}^\mathrm{T} \boldsymbol{\dot{\xi}}_r}{|\boldsymbol{\dot{\xi}}_r|} \right) \rightarrow
 - \frac{\mathbf{e}_1^\mathrm{T} \mathbf{R}^\mathrm{T} \boldsymbol{\dot{\xi}}_r}{|\boldsymbol{\dot{\xi}}_r|} \left( \omega_d  - \omega_r \right).
\end{equation}
From (\ref{eq:aysmptotic convergence}), $\mathbf{e}_2^\mathrm{T} \mathbf{R}^\mathrm{T} \boldsymbol{\dot{\xi}}_r$ converges to zero. Therefore, the $\omega_d$ given in (\ref{eq: designed velocities}) converges to
\begin{equation} \label{eq: new designed angular velocity}
\omega_d \rightarrow\omega_r - k_2 |\mathbf{\dot{\xi}_r}| \bar{\xi}_2.
\end{equation}
Using (\ref{eq: new designed angular velocity}) in (\ref{eq:proof 1}), one gets
\begin{equation} \label{eq:proof 2}
 \frac{d}{dt} \left( \frac{\mathbf{e}_2^\mathrm{T} \mathbf{R}^\mathrm{T} \boldsymbol{\dot{\xi}}_r}{|\boldsymbol{\dot{\xi}}_r|} \right) \rightarrow
 k_2  (\mathbf{e}_1^\mathrm{T} \mathbf{R}^\mathrm{T} \boldsymbol{\dot{\xi}}_r) \bar{\xi}_2.
\end{equation}
On the other hand, since $\left( \mathbf{e_2^T} \mathbf{R^T} \mathbf{\dot{\xi}_r} \right) \rightarrow 0$ holds (from (\ref{eq:aysmptotic convergence})),
$\frac{d}{dt}\left( \mathbf{e_2^T} \mathbf{R^T} \mathbf{\dot{\xi}_r} \right) \rightarrow 0$ must be true.
Using Assumption \ref{assump1}, $|\mathbf{\dot{\xi}_r}| \neq 0$.
Hence,  $\frac{d}{dt}\left( \frac{\mathbf{e_2^T} \mathbf{R^T} \mathbf{\dot{\xi}_r}}{|\mathbf{\dot{\xi}_r}|} \right) \rightarrow 0$ must also be true.
Therefore, $\mathbf{e_1^T R^T \dot{\xi}_r} k_2 \bar{\xi}_2 \rightarrow 0$. But, $\mathbf{e_1^T R^T \dot{\xi}_r} \nrightarrow 0$, and $k_2 >0$. Therefore, $\bar{\xi}_2 \rightarrow 0$.
\end{proof}

\subsection{Gain tuning}
\label{subsec: gain tuning}
Generally, gain tuning for nonlinear control laws is less obvious than for linear control ones. However, we will show how the gains for our proposed controller can be tuned by using existing tuning techniques in linear control theory. A simple way to determine the control gains consists in using the pole placement technique for the linearization of the system \eqref{eq:new system of equations} at the equilibrium and for a particular reference trajectory such as a straight line or a circle with constant forward speed. In this case, one deduces that
\begin{equation} \label{eq:derivative error coordinates in body frame simplified}
 \begin{split}
  \boldsymbol{\dot{\bar{\xi}}} 
                           \approx & \; \eta_1 \mathbf{e_1} - \mathbf{R}^\mathrm{T} \boldsymbol{\dot{\xi}}_r
   =
  \begin{bmatrix}
   \eta_1 - \mathbf{e}_1^\mathrm{T} \mathbf{R}^\mathrm{T} \boldsymbol{\dot{\xi}}_r \\
   - \mathbf{e}_2^\mathrm{T} \mathbf{R}^\mathrm{T} \boldsymbol{\dot{\xi}}_r
  \end{bmatrix}.
 \end{split}
\end{equation}
Then, defining $\tilde{\eta}_1 \triangleq \eta_1 - \eta_{1d}$ and using the definition of $\eta_{1d}$ given in (\ref{eq: designed velocities}), one obtains
\begin{equation} \label{eq:linearized derivative error in body frame}
 \boldsymbol{\dot{\bar{\xi}}} = \begin{bmatrix} \dot{\bar{\xi}}_1 \\ \dot{\bar{\xi}}_2 \end{bmatrix} \approx
 \begin{bmatrix}
  \eta_{1d} + \tilde{\eta}_1 - \mathbf{e}_1^\mathrm{T} \mathbf{R}^\mathrm{T} \boldsymbol{\dot{\xi}}_r \\
  - \mathbf{e}_2^\mathrm{T} \mathbf{R}^\mathrm{T} \boldsymbol{\dot{\xi}}_r
 \end{bmatrix}
 =
 \begin{bmatrix}
    -k_1 \bar{\xi}_1 + \tilde{\eta}_1 \\
    -\mathbf{e}_2^\mathrm{T} \mathbf{R}^\mathrm{T} \boldsymbol{\dot{\xi}}_r
 \end{bmatrix}.
\end{equation}
On the other hand, by differentiating $\tilde \eta_1$ and by using (\ref{eq:control input signals}), one deduces
\begin{equation} \label{eq:derivative error in eta1}
 \dot{\tilde{\eta}}_1 = \dot{\eta}_1 - \dot{\eta}_{1d} = u_1 - \dot{\eta}_{1d} = - k_4 \bar{\xi}_1 - k_6 (\eta_1 - \eta_{1d}).
\end{equation}
One can regroup the expressions for $\dot{\bar{\xi}}_1$ (from (\ref{eq:linearized derivative error in body frame})) and $\dot{\tilde{\eta}}_1$ (from (\ref{eq:derivative error in eta1})) in matrix form as follows
\begin{equation*} \label{eq:first gain analysis matrix}
 \begin{bmatrix}
    \dot{\bar{\xi}}_1 \\ \dot{\tilde{\eta}}_1
 \end{bmatrix}
 =
 \begin{bmatrix}
    -k_1 & 1 \\
    -k_4 & -k_6
 \end{bmatrix}
 \begin{bmatrix}
    \bar{\xi}_1 \\ \tilde{\eta}_1
 \end{bmatrix}
 =
 \mathbf{A_1}
 \begin{bmatrix}
    \bar{\xi}_1 \\ \tilde{\eta}_1
 \end{bmatrix}.
\end{equation*}
Now, the gains $k_1$, $k_4$ and $k_6$ can be chosen such that $\mathbf{A_1}$ is Hurwitz. One verifies that the characteristic polynomial of $\mathbf{A_1}$ given by
\[
P_1(\lambda) = \lambda^2 + (k_1 + k_6) \lambda + k_1 k_6 + k_4
\]
is Hurwitz if $(k_1+k_6) k_1k_6 > k_4$. For instance, given two negative real numbers $\lambda_{1,2}<0$ and choosing
\begin{equation}\label{eq:conditions for k1, k4, k6}
\left\{
\begin{split}
k_1 &< - \max(\lambda_1, \lambda_2), \\
k_6 &= -\lambda_1 -\lambda_2 -k_1, \\
k_4 &= (k_1 + \lambda_1) (k_1 + \lambda_2),
\end{split}
\right.
\end{equation}
one ensures the positivity of $k_1$, $k_4$ and $k_6$ and that the matrix $\mathbf{A_1}$ is Hurwitz with two negative real poles $\lambda_{1,2}<0$.

Now, let $\tilde{\theta}$ be the angle formed between $\mathbf{R e}_1$ and $\frac{\boldsymbol{\dot{\xi}_r}}{|\boldsymbol{\dot{\xi}}|}$ (i.e., $\cos{\tilde{\theta}} = \left( \mathbf{R e}_1 \right)^\mathrm{T} \frac{\boldsymbol{\dot{\xi}_r}}{|\boldsymbol{\dot{\xi}}_r|}$). Then, in the first order approximation, one has $\tilde{\theta} \approx - \frac{\mathbf{e}_2^\mathrm{T} \mathbf{R}^\mathrm{T}\boldsymbol{\dot{\xi}}_r}{|\boldsymbol{\dot{\xi}}_r|}$. One can easily verifies that
\begin{equation*} \label{eq:derivative error in yaw simplified}
 \dot{\tilde{\theta}} \approx \tilde{\omega} - k_2 |\mathbf{\dot{\xi}_r}| \bar{\xi}_2 - k_3 |\mathbf{\dot{\xi}_r}|^2 \tilde{\theta},
\end{equation*}
with $\tilde{\omega} \triangleq \omega - \omega_d$. Besides, by differentiating $\tilde{\omega}$ and using (\ref{eq:control input signals}) one also verifies that
\[
\dot{\tilde{\omega}} = \bar u_2 -\dot{\omega}_d = \frac{k_5}{k_2} \frac{\mathbf{e}_2^\mathrm{T} \mathbf{R}^\mathrm{T}\boldsymbol{\dot{\xi}}_r}{|\boldsymbol{\dot{\xi}}_r|}  - k_7 \, \tilde{\omega} = - \frac{k_5}{k_2} \tilde{\theta} - k_7 \, \tilde{\omega}.
\]
From here, one deduces the following second linearized subsystem in matrix form
\begin{equation*} \label{eq:second gain analysis matrix}
 \begin{bmatrix}
    \dot{\bar{\xi}}_2 \\ \dot{\tilde{\theta}} \\ \dot{\tilde{\omega}}
 \end{bmatrix}
 \!\!=\!\!
 \begin{bmatrix}
    0                           & |\boldsymbol{\dot{\xi}}_r|         & 0 \\
    -k_2 |\boldsymbol{\dot{\xi}}_r| & - k_3 |\boldsymbol{\dot{\xi}}_r|^2 & 1 \\
    0                           & -\frac{k_5}{k_2}               & -k_7
 \end{bmatrix}
 \begin{bmatrix}
    \bar{\xi}_2 \\ \tilde{\theta} \\ \tilde{\omega}
 \end{bmatrix}
 \! = \mathbf{A_2} \!\begin{bmatrix}
    \bar{\xi}_2 \\ \tilde{\theta} \\ \tilde{\omega}
 \end{bmatrix}.
\end{equation*}
It matters now to choose the gains $k_2$, $k_3$, $k_5$ and $k_7$ such that the characteristic polynomial of $\mathbf{A_2}$ given by
\begin{equation*} \label{eq:characteristic polynomial for A2}
\begin{split}
P_2(\lambda)
  &=  \lambda^3 + \lambda^2 (k_3 |\boldsymbol{\dot{\xi}}_r|^2 + k_7)  \\&\quad+ \lambda \left( k_3 k_7 |\boldsymbol{\dot{\xi}}_r|^2 + \frac{k_5}{k_2} + k_2 |\boldsymbol{\dot{\xi}}_r|^2 \right) + |\boldsymbol{\dot{\xi}}_r|^2 k_2 k_7
\end{split}
\end{equation*}
is Hurwitz. To simplify the task, let us set
\[
k_3 = \frac{\kappa_3}{|\boldsymbol{\dot{\xi}}_r|},\ k_5 = |\boldsymbol{\dot{\xi}}_r|^2 \kappa_5,\ k_7 = |\boldsymbol{\dot{\xi}}_r| \kappa_7,
\]
with $\kappa_3, \ \kappa_5,\ \kappa_7$ positive constants. Then, the polynomial $P_2(\lambda)$ can be factorized as
\begin{equation*} \label{eq:factorized characteristic polynomial for A2}
\begin{split}
P_2(\lambda) & = \lambda^3 + \lambda^2 |\boldsymbol{\dot{\xi}}_r| (\kappa_3  + \kappa_7) \\
&\quad + \lambda |\boldsymbol{\dot{\xi}}_r|^2 \left( \kappa_3 \kappa_7 + \frac{\kappa_5}{k_2} + k_2 \right) + |\boldsymbol{\dot{\xi}}_r|^3 k_2 \kappa_7.
\end{split}
\end{equation*}
From the above expression of $P_2(\lambda)$, one may set poles for this characteristic polynomial depending on the norm of the reference velocity as
$\lambda_{1,2,3} = |\boldsymbol{\dot{\xi}}_r|^2 \bar{\lambda}_{1,2,3}$, with $\bar{\lambda}_{1,2,3}$ negative real numbers. This implies the following relations
\begin{equation} \label{eq:conditions for k2, k3, k5, k7}
\left\{ \begin{split}
  \kappa_7 & = - \frac{\bar{\lambda}_1 \bar{\lambda}_2 \bar{\lambda}_3}{k_2},  \\
  \kappa_3 & = - \bar{\lambda}_1 -  \bar{\lambda}_2 - \bar{\lambda}_3 - \kappa_7, \\
  \kappa_5 & = k_2 \left( \bar{\lambda}_1 \bar{\lambda}_2 + \bar{\lambda}_1 \bar{\lambda}_3 + \bar{\lambda}_2 \bar{\lambda}_3 -\kappa_3 \kappa_7 - k_2 \right).
 \end{split} \right.
\end{equation}
Then, the values of $\bar\lambda_{1,2,3} \ (<0)$ and $k_2 \ (>0)$ should be chosen such that $\kappa_3$, $\kappa_5$ and $\kappa_7$ computed according to \eqref{eq:conditions for k2, k3, k5, k7} are positive. For instance, by setting $\bar{\lambda}_{1,2}$ equal, and choosing $k_2\ (>0)$ and $\bar\lambda_3\ (<0)$ such that
\begin{equation} \label{eq:example conditions for k2, k3, k5, k7}
\left\{ \begin{split}
  k_2 & <  \bar{\lambda}_1^2, \\
  \bar{\lambda}_3 & > \frac{2 \bar{\lambda}_1 k_2}{\left( \bar{\lambda}_1^2 - k_2 \right)},
 \end{split}  \right.
\end{equation}
one can verify from \eqref{eq:conditions for k2, k3, k5, k7} that $\kappa_3$, $\kappa_5$ and $\kappa_7$ are positive.

\section{Results and discussion}
\label{sec:results}

\begin{figure}[!t]
\centering
\subfigure[Tracking(Lyapunov)]{
\includegraphics[width=0.225\textwidth]{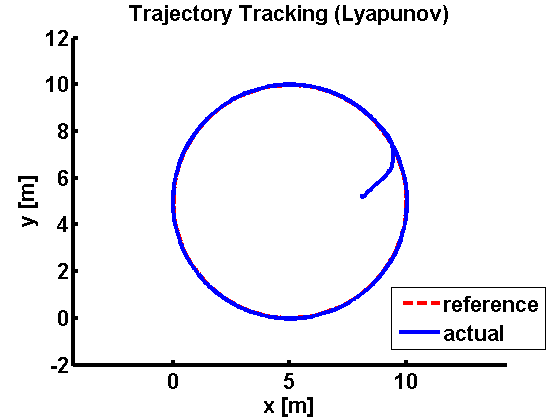}
\label{fig:circle tracking lyapunov}
}
\subfigure[Error(Lyapunov)]{
\includegraphics[width=0.225\textwidth]{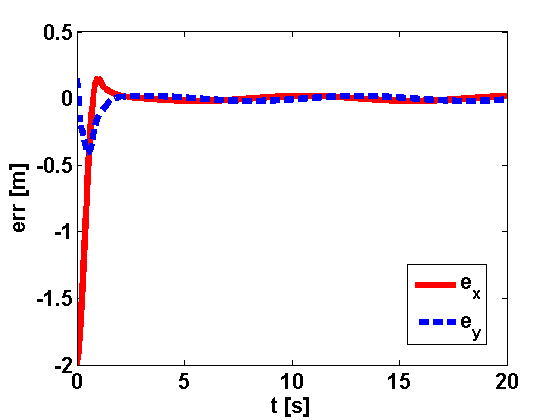}
\label{fig:error for circle tracking lyapunov}
}
\\
\subfigure[Tracking(Feedback linearization)]{
\includegraphics[width=0.225\textwidth]{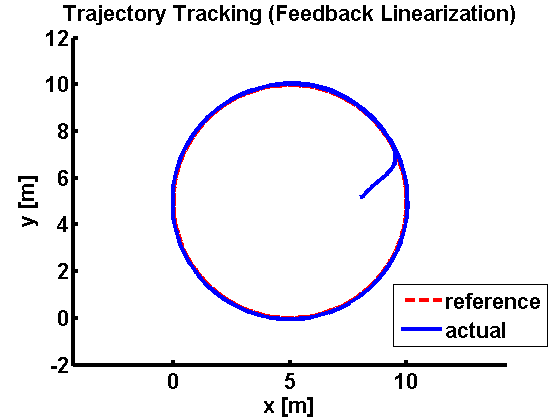}
\label{fig:circle tracking feedback}
}
\subfigure[Error(Feedback linearization)]{
\includegraphics[width=0.225\textwidth]{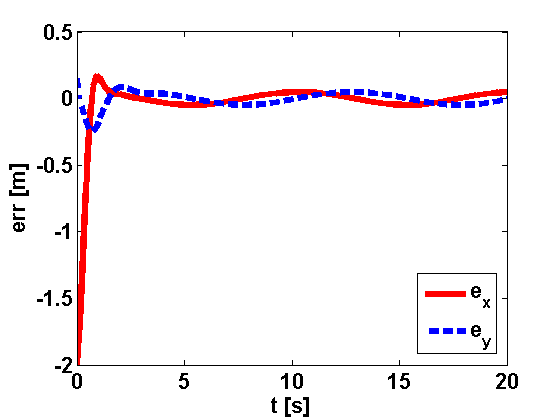}
\label{fig:error for circle tracking feedback}
}
\vspace{-.1truein}
\caption[]{For a reasonable comparison of the performance of the controller proposed in the present work and the dynamic feedback linearization controller proposed in \cite{caracciolo.1999.icra},
the gains are independently tuned for a circular trajectory with $5$ m radius in order to achieve similar behaviors in terms of the rising time, maximum peak and the decay ratio,
for both the error along the X- and Y- directions as shown in \subref{fig:error for circle tracking lyapunov} and \subref{fig:error for circle tracking feedback}.
}
\label{fig:gain tuning with circular trajectory}
\end{figure}

In this section, the performance of the controller proposed in the present work using a backstepping procedure that guarantees the Lyapunov stability
is compared to that of the controller proposed by \textit{Caracciolo et al.} in \cite{caracciolo.1999.icra} using the dynamic feedback linearization approach.
The comparison is performed using the MATLAB/SIMULINK.
The system \eqref{eq:equations of motion} is solved using MATLAB ode-solver of type \textit{ode5 (Dormand-Prince)} with a fixed time step (5ms).

In the first place, the considered initial conditions are $x_o = 8$ m, $y_o = 5$ m, $\theta_o = \pi/2$ rad, $\dot{x}_o = 0.5$ m/s, $\dot{y}_o = 0.5$ m/s, $\dot{\theta}_o = 0.1$ rad/s.
Next, the considered robot dimensions correspond to those of an ATRV-2 mobile robot used in \cite{caracciolo.1999.icra} with
$m = 116$ kg, $I=20$  $\text{kgm}^2$,  $a=0.37$ m, $b = 0.55$ m, $t =0.315$ m, $d_0 = 0.18$ m, and $r = 0.2$ m.

For a reasonable comparison between the two controllers, the gains are independently tuned for tracking a circular trajectory of $5$ m radius.
The criteria for choosing the gains for each controller are such that similar raising time, maximum peak and decay ratio are obtained for both cases while tracking the considered trajectory.
For the controller proposed in the present work, the conditions (\ref{eq:conditions for k1, k4, k6}) and (\ref{eq:conditions for k2, k3, k5, k7}) given in
Section \ref{subsec: gain tuning} must be also satisfied.
The resulting gains for the dynamic-feedback-linearization-based controller are
$k_{v_1} = 131$, $k_{a_1} = 20$, $k_{p_1} = 325$, $k_{v_2} = 210$, $k_{a_1} = 67$, and $k_{p_1} = 580$.
Whereas, for the Lyapunov-based controller, the resulting gains are
$k_1 = 3$, $k_2 = 15.8$, $\kappa_3 = 7.95$, $k_4 = 1$, $\kappa_5=0.0005$, $k_6=5$, and $\kappa_7=4.05$.

\begin{figure}[!t]
\centering
\subfigure[Tracking(Lyapunov)]{
\includegraphics[width=0.225\textwidth]{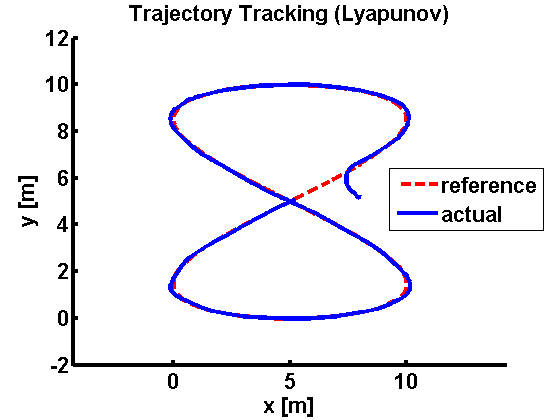}
\label{fig:tracking with noise lyapunov}
}
\subfigure[Error(Lyapunov)]{
\includegraphics[width=0.225\textwidth]{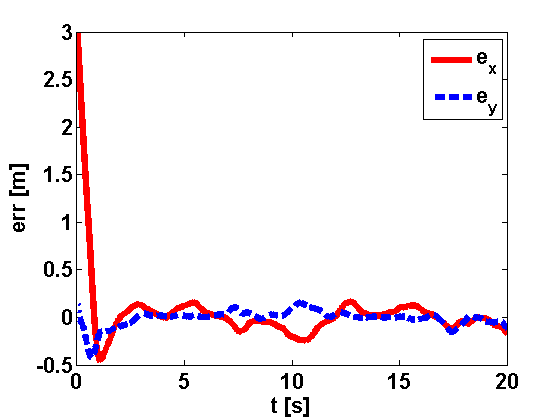}
\label{fig:error with noise lyapunov}
}
\\
\subfigure[Tracking(Feedback linearization)]{
\includegraphics[width=0.225\textwidth]{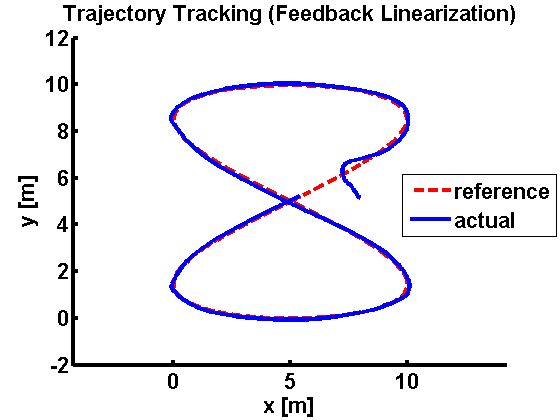}
\label{fig:tracking with noise feedback}
}
\subfigure[Error(Feedback linearization)]{
\includegraphics[width=0.225\textwidth]{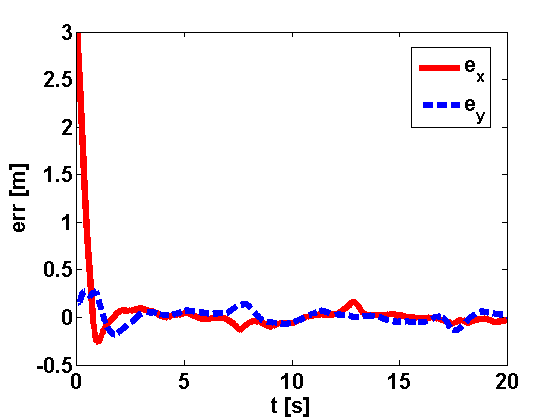}
\label{fig:error with noise feedback}
}
\vspace{-.1truein}
\caption[]{The performance of the proposed controller is compared to the dynamic feedback linearization approach proposed in \cite{caracciolo.1999.icra} while
the robot model is asked to track an eight-shaped Lissajous curve trajectory defined in (\ref{eq:lissajous trajectory}). In addition, a white Gaussian noise is added to the state vector
to emulate the sensor noise and to show the robustness of the controllers.
}
\label{fig:lissajous tracking with noise}
\end{figure}

In effect, Fig. \ref{fig:gain tuning with circular trajectory} shows the choice of such gains makes both controllers track the circular trajectory in a similar fashion.
However, in both cases the errors do not converge to zero but they oscillate.
Because both controllers are designed for the reduced dynamical system defined in (\ref{eq:explicit form of EoMs}), the error asymptotic convergence occurs for this system.
Whereas, when these control laws are used in the full dynamical system defined in (\ref{eq:equations of motion}), oscillatory behaviors can be observed from the results, and
this phenomenon might be due to the discrepancy that exists between the desired longitudinal component of the instantanous center of rotation, $d_0$,
imposed by the operational nonholonomic constraint defined in (\ref{eq:operational nonholonomic constraint})
and the actual ICR along the robot's longitudinal axis, $x_{\text{ICR}}$, as the robot tracks the desired trajectory.

Next, these gains are used to compare the performance of the two controllers in tracking an eight-shaped Lissajous-curve trajectory (shown in Fig. \ref{fig:lissajous tracking with noise}),
a curve characterized by its curvature that continuously changes. The considered Lissajous curve has the following expression
\begin{equation} \label{eq:lissajous trajectory}
 \boldsymbol{\xi}_r =
 \left\{
 \begin{aligned}
    & 5 \left( 1 + \sin{\left( \sqrt{0.4} \, t \right)} \right), \\
    & 5 \left( 1 + \sin{\left( \sqrt{0.4} \, t / 2 \right)} \right).
 \end{aligned}
 \right.
\end{equation}

Further, a multi-variate  white Gaussian noise is added to the state vector to emulate the sensor noise and study the robustness of both controllers.
The considered noise has the following mean and standard deviation values:
$\mu_x = 0$ m, $\mu_y = 0$ m, $\mu_\theta = 0$ rad, $\mu_{\dot{x}} = 0$ m/s, $\mu_{\dot{y}} = 0$ m/s, $\mu_{\dot{\theta}} = 0$ rad/s,
$\sigma_x = 0.02$ m, $\sigma_y = 0.02$ m, $\sigma_\theta = 0.01$ rad, $\sigma_{\dot{x}} = 0.08$ m/s, $\sigma_{\dot{y}} = 0.08$ m/s, and $\sigma_{\dot{\theta}} = 0.01$ rad/s.

\begin{figure}[!t]
\centering
\subfigure[Tracking(Lyapunov)]{
\includegraphics[width=0.225\textwidth]{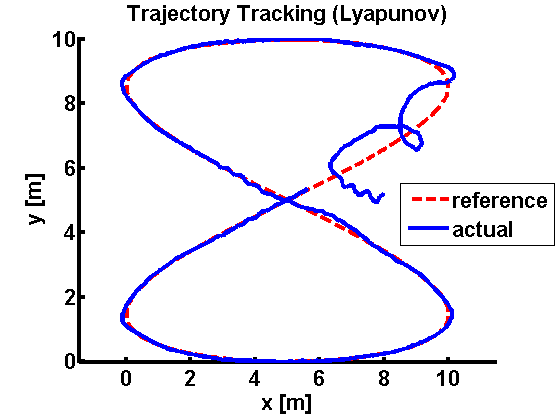}
\label{fig:tracking delay lyapunov}
}
\subfigure[Error(Lyapunov)]{
\includegraphics[width=0.225\textwidth]{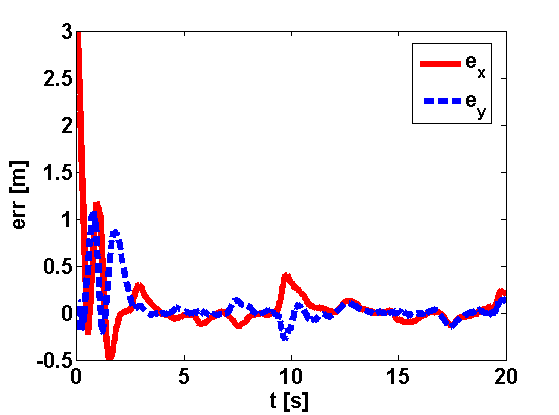}
\label{fig:error delay lyapunov}
}
\\
\subfigure[Tracking(Feedback linearization)]{
\includegraphics[width=0.225\textwidth]{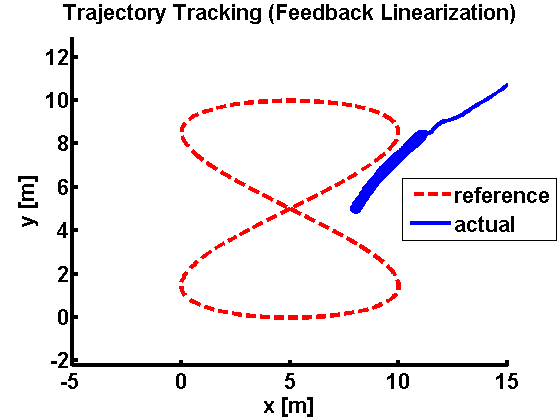}
\label{fig:tracking delay feedback}
}
\subfigure[Error(Feedback linearization)]{
\includegraphics[width=0.225\textwidth]{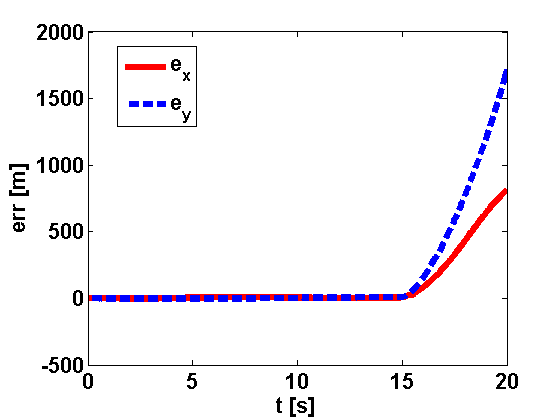}
\label{fig:error delay feedback}
}
\vspace{-.1truein}
\caption[]{The trajectory tracking by the proposed controller (\subref{fig:tracking delay lyapunov} and \subref{fig:error delay lyapunov}) is compared to the trajectory tracking
           by the controller proposed in \cite{caracciolo.1999.icra} (\subref{fig:tracking delay feedback} and \subref{fig:error delay feedback}) with the emulated sensory noise and
           a control time delay of $10$ ms along with a zero-order-holder to hold the control input signal for $10$ ms.
}
\label{fig:lissajous tracking with delay}
\end{figure}

Fig. \ref{fig:lissajous tracking with noise} shows the results of tracking the Lissajous curve trajectory with emulated sensory noise for both controllers.
In both cases, the controllers are able to track the reference trajectory even in the presence of the described noise. Notice that the error is accentuated when
tracking the four corners of the Lissajous curve, where the curvature abruptly changes. 

Finally, on top of the additive noise, a control time delay is also considered to further study the robustness of the system controlled by each of the considered controllers.
A control time delay of $10$ ms is introduced along with a zero-order-holder to hold the control input signal for $10$ ms.
The results shown in Fig. \ref{fig:tracking delay lyapunov} and Fig. \ref{fig:error delay lyapunov} reveal that the controller proposed in the present work
is able to track the desired trajectory, whereas this was not the case for the dynamic feedback linearization approach,
as the controller was unable to track the desired trajectory (see Fig. \ref{fig:tracking delay feedback} and Fig. \ref{fig:error delay feedback}).

\section{Conclusion and Future work}
\label{sec:conclusion}
In the present work, we propose a novel trajectory controller for a four-wheel skid-steering mobile robot using a backstepping technique guaranteeing the Lyapunov stability
on top of the dynamic model that \textit{Caracciolo et al.} proposed in \cite{caracciolo.1999.icra}.
Their feedback-linearization-based controller requires the acceleration state as well as the non-zero velocity constraint at any instant of time,
whereas the proposed controller does not require none of these preconditions.
Moreover, the proposed controller is robust in tracking trajectories even in the presence of measurement noise and control time delay.

In the near future, we will experimentally validate the performance of the proposed controller.
On the other hand, the error dynamics observed from using both controllers show that the error does not asymptotically vanish.
We believe that this effect is observed because the equality operational nonholonomic constraint used in the present work
overconstrains the instantaneous center of rotation to be at a fixed distance from the robot's center of gravity along the longitudinal direction.
In the future, we will relax this equality constraint into an inequality constraint with the hope to show asymptotic convergence.

\section*{Acknowledgement}
This work is partially supported by the RAPID-FRAUDO project (Num. 112906242) funded by the DGA (French Defence Agency).





\end{document}